\newtheorem{thm}{Theorem}
\newtheorem{cor}{Corollary}[section]
\newtheorem{lem}{Lemma}[section]
\newtheorem{prop}{Proposition}[section]
\newtheorem{rem}{Remark}
\newcommand{\comment}[1]{}
\newcommand{\Real}{\mathbb R}
\newcommand{\bbP}{\mathbb P}
\newcommand{\bbQ}{\mathbb Q}
\newcommand{\Lp}{L^\bbp}
\newcommand{\Lq}{L^\bbq}
\newcommand{\bbQP}{{\bbQ\times\bbP}}
\newcommand{\bbt}{{2}}
\newcommand{\Lt}{{L^\bbt}}
\newcommand{\Hamil}{\mathbf {H}}
\newcommand{\bbp}{\mathfrak{p}}
\newcommand{\bbq}{\mathfrak{q}}
\newcommand{\bbg}{\mathfrak{g}}
\newcommand{\bbf}{\mathfrak{f}}
\newcommand{\cT}{\mathcal{T}}
\newcommand{\cS}{\mathcal{S}}
\newcommand{\hs}{{h^*}}
\newcommand{\cTs}{{\cT^\dag}}
\newcommand\scpr[2]{\langle #1, #2 \rangle}
\theoremstyle{dgthm}
\newtheorem{theorem}{Theorem}
\theoremstyle{dgdef}
\newtheorem{definition}{Definition}
\begin{document}

	\articletype{Research Article}
	\received{Month	DD, YYYY}
	\revised{Month	DD, YYYY}
  \accepted{Month	DD, YYYY}
  \journalname{De~Gruyter~Journal}
  \journalyear{YYYY}
  \journalvolume{XX}
  \journalissue{X}
  \startpage{1}
  \aop
  \DOI{10.1515/sample-YYYY-XXXX}

\title{On $L^q$ Convergence of the Hamiltonian Monte Carlo}
\runningtitle{$L^q$ Convergence of HMC}

\author[1]{Soumyadip Ghosh}
\author[1]{Yingdong Lu}
\author*[1]{Tomasz Nowicki} 
\runningauthor{S. Ghosh et al.}
\affil[1]{\protect\raggedright 
IBM T.J. Watson Research Center, Yorktown Heights, NY, U.S.A., e-mail: \{ghoshs,yingdong,tnowcki\}@us.ibm.com}
	
	
\abstract{We establish $L_\bbq$ convergence for Hamiltonian Monte Carlo algorithms. More specifically, under mild conditions for the associated Hamiltonian motion, we show that the outputs of the algorithms converge (strongly for $2\le \bbq<\infty$ and weakly for $1<\bbq<2$) to the desired target distribution.
}

\keywords{Convergence, $\Lq$ spaces, Hamiltonian Monte Carlo}

\maketitle

\section{Introduction}

\comment{

Recent development and usage of Machine Learning (ML), Data Mining (DM) and Artificial Intelligence (AI)
resulted in a vast variety of new or refurbished algorithms to deal with large data sets, be it collected or streamed. While such new methods are usually extensively tested on various data sets, they are rarely equally vetted thoroughly by theoretical means. Moreover, algorithms tend to rely on discrete models, but the nature of data and approximation approaches suggest rather a continuous point of view that has not been adequately explored. With these ideas in mind, we take an approach to view 
the right objects of investigation of algorithms 
as very general features of data such as distributions, rather than continuous parameters; and the convergence of algorithm in the context of convergence in proper function spaces. More specifically, we perceive the algorithms as iterative transformations of the distribution of points (corresponding to the parameters) in some underlying domains, investigate the convergence behavior of this dynamical system. 
The leading underlying idea is to move from a relatively simple objects such as finite sets or points in finite dimensional Euclidean spaces with quite complicated transformations to
simple transformations in richer spaces such as distributions in function spaces.
}


\noindent
Hamiltonian Monte Carlo or Hybrid Monte Carlo (HMC) algorithm is a method to obtain random samples from a
(target) probability distribution 
on a space $\bbQ$ whose density is known only up to a factor. The target distribution is expressed as $\bbf/\int_\bbQ\bbf$ with function value $\bbf$ known, but the normalizing constant $\int_\bbQ\bbf$ is not known or at least is prohibitively expensive to calculate.
It is an algorithm of the Metropolis-Hastings type known for a while~\cite{DuaneEtAl87} for estimating integrals.
Convergence in various probability senses can be found in the literature, see, e.g. ~\cite{livingstone2019,ADHMC}.
In~\cite{GHOSH2022107811}, convergence of the densities in $\Lt$ Hilbert space was obtained. Meanwhile, utilizing different norms, in addition to providing great flexibility in terms of model selections in the applications of machine learning algorithms, can have significant impact in the behavior of machine learning algorithms, see, e.g.~\cite{hastie2015statistical}.
In this paper, we address the problem of convergence of the algorithm for densities in $\Lq$ spaces.
\comment{Our goal is to provide a clear and understandable reason why HMC algorithm converges to the right limit for densities in the spaces $\Lq(\bbQ)$, for $1<\bbq<\infty$. These convergence results not only paint a more complete picture of the HMC method, thus deepen its understanding, but also provide great flexibility in terms of model selections in the applications of machine learning algorithms.} 

In each HMC iterative step, the input sample point in space $\bbQ$ is lifted to the product space $\bbQP$ by adding a sample from an auxiliary distribution of choice~$\bbg$, where $\bbg>0$ on $\bbP$ and $\int_\bbP\bbg=1$, then the pair is transformed via the Hamiltonian motion generated by the Hamiltonian energy $\Hamil(q,p)=-\log(\bbf(q)\cdot\bbg(p))$ and projected back to $\bbQ$ as the input of the next iteration. Details can be found in  ~\cite{GHOSH2022107811}.

\comment{
the following steps. Given an initial distribution~$h$ (sample points) in a given space $\bbQ$, double (the dimension of) 
the space by 
considering (the phase space) $\bbQP$, with $\bbP\sim\bbQ$.
Next spread each point $q\in\bbQ$ to a point $(q,p)\in\bbQP$
by sampling $p\in\bbP$ from a distribution of choice~$\bbg$, where $\bbg>0$ on $\bbP$ and $\int_\bbP\bbg=1$. 
Then move each point $(q,p)$ to a new point 
$(Q,P)=H(q,p)$, where the transformation $H:\bbQP\to\bbQP$ satisfies some special invariance properties. Finally project the result to 
a new sample of points $Q$ in $\bbQ$ with a new distribution $\hat{h}$ which shall be used as the 
initial sample (or distribution) for the next step.
With the right choice of $H$ the iteration of this procedure will result in an approximate sample from the target distribution.

The success of the algorithm lies in the appropriate choice of the transformation $H$. 
Originally the motion was generated by Hamiltonian dynamics.
When the target distribution has a density proportional to a given function $\bbf$ and the distribution of choice is $\bbg$ then for $H$ one can take a Hamiltonian motion generated by the Hamiltonian energy $\Hamil(q,p)=-\log(\bbf(q)\cdot\bbg(p))$. That is $(Q,P)=H(q,p)$ is the solution of the time evolution
$\dot{Q}=\partial\Hamil/\partial P$, $\dot{P}=-\partial\Hamil/\partial Q$ after some carefully chosen time $t$ with initial point $(q,p)$.
Clearly $H$ does not depend on the (unknown) constant factor in front of $\bbf$. The Hamiltonian motion has the needed invariance properties which we shall enumerate in the next Section, which are sufficient for HMC to converge to a distribution proportional to $\bbf$.
Effectively it means that as the result of applying the algorithm one can obtain the normalizing constant $\int_\bbQ \bbf$ or any expected value of a function $\phi$ with respect to the probability distribution proportional to $\bbf$: $\int\phi\cdot\bbf/\int\bbf$. In practical implementation, the motion is calculated by numerical methods such as the \emph{leap-frog algorithm} with  the needed invariance properties. We shall not deal with the numerics in this paper.}

\medskip

Viewing as transformation of distributions,  one can present HMC as follows: Given some initial distribution $h(q)$ on $\bbQ$ one produces a joint distribution $(h\cdot\bbg)(q,p)=h(q)\cdot \bbg(p)$ on the phase space $\bbQP$ (joining them as independent) then the points are moved by the motion, $(q,p)\mapsto (Q,P)=H(q,p)$ producing another distribution $(h\cdot\bbg)\circ H(q,p)=h(Q)\cdot(P)$ in $\bbQP$, and finally one projects it to a distribution on $\bbQ$
by calculating the marginal $\int_\bbP (h\cdot\bbg)\circ H(q,p)\,dp$, which is a result of the action of the algorithm in one step. In short
\begin{equation}
  \label{eqdef cT}
  \cT(h)(q)=\int_\bbP (h\cdot \bbg)\circ H (q,p)\,dp\,,
\end{equation}
 and from a rather complicated algorithm we receive a relatively simple, linear operator on some space of integrable functions. The convergence of the algorithm thus corresponds to the convergence of the sequences of iterates of $\cT$.
\comment{
A few remarks.
\begin{itemize}
\item It is clear that the Hamiltonian motion $H$ does not depend on the constant factor in front of $\bbf$.
\item The motion $H$ in practical implementation is calculated by the \emph{leap-frog algorithm} which displays the needed invariance properties. We shall not deal with it in this paper.
\item An example of the situation where the target distribution is known up to a normalizing constant is the Bayesian update. In order to establish the distribution of (random)
    parameters $\theta$ influencing the outcome $D$ of the observations, when we know all the conditional probabilities $P(D|\theta)$ one uses the knowledge of the outcome of an experiment $D$ to improve the estimate:
    Given the estimate of the target distribution $\pi(\theta)$ \emph{prior} to the experiment we calculate $\hat{\pi}(\theta)=
    P_\pi(\theta|D)=P_\pi(\theta, D)/P_\pi(D)=P(D|\theta)\cdot\pi(\theta)/\sum_{\theta'} P(D|\theta')\cdot\pi(\theta')$ and take $\hat{\pi}$ as a new estimate \emph{posterior} to the experiment. However the sum (integral) in the denominator may be not that easy to calculate. This yields to $\hat{\pi}(\theta)\sim P(D|\theta)\pi(\theta)$ without the normalizing factor.
\end{itemize}
}

\section{Results}
Assume that the motion $H:\bbQP\to\bbQP$, $H(q,p)=(Q,P)$ satisfies the following \textbf{invariance properties}:
\begin{itemize}
  \item $(\bbf\cdot\bbg)\circ H=\bbf\cdot\bbg$;
  \item $\iint_\bbQP A\circ H =\iint_\bbQP A$ for any integrable $A$;
  \item $Q(q,\bbP)=\bbQ$ for (almost) every $q$;
  \item $\bbQ$ is the support of $\bbf$.
\end{itemize}
 For $\cT h=\int_\bbP (h\cdot\bbg)\circ H$ let $\cT^{n+1}=\cT^n\circ\cT$.
 The \emph{adjoint} operator $\cTs$ is given by the same formula~\eqref{eqdef cT} with $H^{-1}$ in place of $H$ and is described below in Section~\ref{sec:adjoint}, a \emph{self-adjoint} operator satisfies $\cTs=\cT$.
 
 Let $\Lq$ denote the space of  functions $h:\bbQ\to\Real$ such that $||h||_\bbq^\bbq=\int_\bbQ |h|^\bbq/\bbf^{\bbq-1}<\infty$ and the support of $h$ is included in the support of $\bbf$, which we may assume to be  equal to $\bbQ$.
  \begin{thm}
   Assume the invariance properties and assume that the operator $\cT$ is self-adjoint. For every $h\in\Lq(\bbQ)$, $1<\bbq<\infty$ the sequence $\cT^n h$ converges weakly in $\Lq$ to
   $\bbf \cdot \int h/\int\bbf$ and for $2\le \bbq<\infty$ it converges also strongly. 
 \end{thm}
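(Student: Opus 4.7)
My plan is to conjugate $\cT$ via $h\mapsto u:=h/\bbf$ into a Markov averaging operator on the finite measure $\bbf\,dq$, invoke the strong $\Lt$-convergence of~\cite{GHOSH2022107811}, and bootstrap to the full range of $\bbq$ by truncation (for $\bbq\ge 2$) and by self-adjoint duality (for $1<\bbq<2$). First, the invariance $(\bbf\bbg)\circ H=\bbf\bbg$ lets me rewrite $\bbg(P)=\bbf(q)\bbg(p)/\bbf(Q)$, giving
\[
(\cT h)(q)=\bbf(q)\int_\bbP\frac{h}{\bbf}\bigl(Q(q,p)\bigr)\,\bbg(p)\,dp,
\]
so that $\tilde\cT u(q):=\int u(Q)\,\bbg(p)\,dp$ is a genuine Markov average and $\cT h=\bbf\cdot\tilde\cT(h/\bbf)$. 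Applying Jensen's inequality to $t\mapsto t^\bbq$ and then $\iint A\circ H=\iint A$ with $A=|u|^\bbq\,\bbf\bbg$ (after using the first invariance to swap $(\bbf\bbg)(q,p)$ for $(\bbf\bbg)(Q,P)$) yields the contractivity $\|\cT h\|_\bbq\le\|h\|_\bbq$ on every $\Lq$, $1\le\bbq\le\infty$; in particular $\cT\bbf=\bbf$.

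\textbf{Strong $\Lt$-limit.} On the Hilbert space $\Lt$, $\cT$ is a self-adjoint contraction with spectrum in $[-1,1]$. The surjectivity assumption $Q(q,\bbP)=\bbQ$ rules out $-1$ from the point spectrum (if $\tilde\cT u=-u$, positivity of the Markov kernel forces $\tilde\cT u^\pm=u^\mp$ on disjoint supports, and since $\int u^\pm\bbf$ is preserved, the reach of $Q(q,\cdot)$ collapses $u$ to $0$) and identifies $\ker(I-\cT)=\mathrm{span}(\bbf)$, so by the spectral theorem $\cT^n h\to\bbf\cdot\int h/\int\bbf$ strongly in $\Lt$, the limit being the weighted orthogonal projection onto $\mathrm{span}(\bbf)$ under $\langle h_1,h_2\rangle=\int h_1h_2/\bbf$. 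This is the content of~\cite{GHOSH2022107811}.

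\textbf{Bootstrap to $2\le\bbq<\infty$.} Because $\int\bbf<\infty$, H\"older on the finite measure $\bbf\,dq$ gives $\Lq\hookrightarrow\Lt$, so the $\Lt$-limit already applies to every $h\in\Lq$ and what remains is an upgrade to norm-convergence in $\Lq$. I would truncate: setting $u_M:=(h/\bbf)\,\mathbf 1_{|h/\bbf|\le M}$ and $h_M:=u_M\bbf$, the Markov representation gives $|\cT^n h_M|\le M\bbf$ uniformly in $n$, supplying a pointwise dominating function on $\bbf\,dq$. An a.e.-convergent subsequence of $\cT^n h_M/\bbf$ (extracted from the $\Lt$-convergence) combined with dominated convergence on the finite measure $\bbf\,dq$ yields $\cT^n h_M\to(\int h_M/\int\bbf)\bbf$ in $\Lq$, and a standard sub-subsequence argument upgrades this to full-sequence convergence. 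The contractivity $\|\cT^n(h-h_M)\|_\bbq\le\|h-h_M\|_\bbq\to 0$ as $M\to\infty$ closes the triangle-inequality argument.

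\textbf{Weak convergence for $1<\bbq<2$ and the main obstacle.} For $\phi\in L^{\bbq'}$ the dual exponent $\bbq'>2$, so the previous paragraph applied to $\phi$ gives strong $L^{\bbq'}$-convergence $\cT^n\phi\to(\int\phi/\int\bbf)\bbf$. Self-adjointness extended from $\Lt$ to the dual pairing $(h,\phi)\mapsto\int h\phi/\bbf$ (justified by truncating both entries into $\Lt$) then yields $\int(\cT^n h)\phi/\bbf=\int h(\cT^n\phi)/\bbf\to(\int h)(\int\phi)/\int\bbf$, which is precisely the pairing of $\bbf\cdot\int h/\int\bbf$ with $\phi$; hence the weak $\Lq$-limit is as claimed. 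The main obstacle is the previous paragraph: since no spectral calculus is available off $\Lt$, the strong $\Lq$-convergence must be manufactured by hand, and the crucial ingredient is the $L^\infty(\bbf\,dq)$-contractivity $|\tilde\cT u|\le\|u\|_\infty$ implied by the Markov representation, which furnishes the dominating function needed to turn a.e.\ limits into $\Lq$-limits.
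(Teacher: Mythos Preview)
Your argument is correct, but it follows a genuinely different route from the paper's.

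The paper never passes through the $\Lt$ result. Instead it works directly in $\Lq$ for $\bbq\ge 2$: the sequence $(\cT^n h)$ is norm-bounded, hence has weak accumulation points by reflexivity. The key technical step is the pointwise inequality $(\cT^n h)^*\le \cT^n(h^*)$ (convexity of $x\mapsto x^{\bbq-1}$ applied to the averaging representation), which together with self-adjointness gives
\[
\|\cT^m h\|_\bbq^\bbq\le \langle \cT^{2m}h,\,h^*\rangle\to\langle h_\infty,h^*\rangle\le \|h_\infty\|_\bbq\,\|h\|_\bbq^{\bbq-1},
\]
forcing $\|h_\infty\|_\bbq^\bbq=V_\bbq(h):=\lim\|\cT^n h\|_\bbq^\bbq$. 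Since $\cT h_\infty$ is then also a weak limit with the same norm, the strict-inequality case of the contraction lemma pins $h_\infty=\alpha\bbf$. Strong convergence follows from weak convergence plus convergence of norms in the uniformly convex space $\Lq$. The duality step for $1<\bbq<2$ is the same as yours.

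What each approach buys: your truncation-plus-dominated-convergence argument is more elementary once the $\Lt$ theorem is in hand, and it makes transparent use of the Markov structure ($L^\infty$-contractivity supplying the dominant). The paper's route is self-contained in $\Lq$---no detour through $\bbq=2$, no a.e.\ subsequences---and isolates a structural fact (every weak cluster point already has the limiting norm $V_\bbq$) that your method does not surface. A minor simplification on your side: from $|\cT^n h_M/\bbf-\alpha_M|\le 2M$ you get $\|\cT^n h_M-\alpha_M\bbf\|_\bbq^\bbq\le (2M)^{\bbq-2}\|\cT^n h_M-\alpha_M\bbf\|_2^2$, which avoids the a.e.\ subsequence extraction entirely.
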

 \begin{proof}

We observe that $\cT$ is in fact (Lemma~\ref{lem:cT prop}.\eqref{lem:cT prop alt}) an averaging map, thus by the convexity of $x\mapsto x^\bbq$, $\bbq> 1$, the norm of $h$ decreases (Lemma~\ref{lem:cT prop}.\eqref{lem:cT prop norm}) under $\cT$, sharply unless (by coverage assumption) $h=\alpha\bbf$. The spaces $\Lq$, $1<\bbq<\infty$ are reflexive, hence bounded sequences have weak accumulation points. Using self-adjointness and the convexity of $x\mapsto x^{\bbq-1}$, for $\bbq\ge2$ (Lemma~\ref{lem: cT h s and cT hs}) we prove that each accumulation point must be of form $\alpha\bbf$, proving (Corollary~\ref{cor: weak conv}) weak convergence for $\bbq\ge2$. Meanwhile the proof of the convergence of the norms provides (Corollary~\ref{cor: strong conv}) strong convergence. Weak convergence for $1<\bbq<2$ follows (Proposition~\ref{prop:weak conv}) from special properties of the fixed point $\bbf$.
\end{proof}
\begin{rem}
\
\begin{itemize}
\item The Hamiltonian motion satisfies two first integral invariance assumptions, as both the Hamiltonian  and the Lebesgue measure are invariant under such a motion.
\item
    The covering  property $Q(q,\bbP)=\bbQ$ can be weakened to a statement of an eventual coverage, not necessarily in one step. Some type of irreducibility must be assumed to avoid complete disjoint domains of the motion and hence an obvious non existence of a (unique) limit.
\end{itemize}
\end{rem}

\section{The properties of $\cT$ in $\Lq$}

We shall be working in the reflexive spaces $\Lq(\bbQ)$ and its dual $\Lp(\bbQ)$,
where $\bbq,\bbp>1$ are conjugated
real numbers  $\bbq+\bbp=\bbq\cdot\bbp$.
In such spaces for $h\in\Lq$ we have standard:
\begin{eqnarray*}
  \text{norm}&& ||h||_\bbq^\bbq=\int_\bbQ \left|\frac{h}{\bbf}\right|^\bbq \bbf,
  \\
\text{bilinear form }&&\scpr{\cdot}{\cdot}:\Lq\times\Lp\to\Real:\scpr{a}{b}=\int_\bbQ\frac{a\cdot b}{\bbf},
 \\
  \text{and conjugacy }&&*:\Lq\to\Lp, \hs=h\cdot\left(\frac{|h|}{\bbf}\right)^{\bbq-2}
\,.
\end{eqnarray*}
We shall assume, unless stated otherwise, that $h\ge 0$.
\comment{
Call $h/\bbf$ a likelihood (up to an irrelevant  normalizing constant $\int\bbf$) of $h$ with respect to~$\bbf$.
The space  $\widetilde{\Lq}=\{\tilde{h}:\int_\bbQ |\tilde{h}|^\bbq \bbf<\infty\}$ of likelihoods $\tilde{h}=h/\bbf$
with the norm $||\tilde{h}||_\bbq^\bbq=\int_\bbQ |\tilde{h}|^\bbq\cdot\bbf$ is isometric to $\Lq$.}
\begin{lem}\label{lem:prop of nbc}
\begin{eqnarray}
\label{lem:cT prop sc}
a\in \Lq, b\in\Lp&\Rightarrow&
\scpr{a}{b}\le ||a||_\bbq\cdot||b||_\bbp,\\
\label{lem:cT prop hs}
\hs\in \Lp;\quad ||h||_\bbq^\bbq&=&\scpr{h}{\hs}=||\hs||_\bbp^\bbp;\quad (\hs)^*=h,\\
\label{lem:cT prop f}
\bbf\in\Lq;\quad ||\bbf||_\bbq^\bbq&=&\int_\bbQ \bbf;\quad \bbf^*=\bbf,\\
\label{lem:cT prop int f}
\scpr{h}{\bbf}=\int_\bbQ h,\quad h\in\Lq;&&\scpr{\bbf}{\hs}=\int_\bbP \hs,\quad \hs\in\Lp\,.
\end{eqnarray}
\end{lem}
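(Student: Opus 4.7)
The plan is to verify each of the four identities by direct substitution, with the only non-trivial ingredient being classical Hölder for part \eqref{lem:cT prop sc}. The algebraic backbone throughout is the conjugacy $1/\bbp + 1/\bbq = 1$, which I will use in the equivalent forms $\bbq/\bbp = \bbq-1$, $\bbp(\bbq-1) = \bbq$, and $(\bbp-1)(\bbq-1) = 1$; once these are in hand, each assertion reduces to bookkeeping of powers of $\bbf$.

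For \eqref{lem:cT prop sc} I would split the integrand in $\scpr{a}{b} = \int_\bbQ ab/\bbf$ as $(a/\bbf^{1/\bbq})\cdot(b/\bbf^{1/\bbp})$, which is legitimate because $1/\bbq+1/\bbp = 1$, and apply the standard Hölder inequality with respect to Lebesgue measure on $\bbQ$. Raising the $a$-factor to the $\bbq$-th power produces $\int |a|^\bbq/\bbf^{\bbq/\bbp}$, and the identity $\bbq/\bbp = \bbq - 1$ turns this into exactly $\norm{a}_\bbq^\bbq$; symmetrically for the $b$-factor.

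For \eqref{lem:cT prop hs}, the identity $\bbp(\bbq-1) = \bbq$ collapses $\int |\hs|^\bbp/\bbf^{\bbp-1}$ to $\int |h|^\bbq/\bbf^{\bbq-1}$ after substituting $\hs = h(|h|/\bbf)^{\bbq-2}$, and then $\scpr{h}{\hs} = \norm{h}_\bbq^\bbq$ is a one-line substitution. For the involution $(\hs)^* = h$ I would expand $\hs(|\hs|/\bbf)^{\bbp-2}$ and collect powers of $h$ and of $\bbf$; both exponents reduce, via $(\bbp-1)(\bbq-1) = 1$, to give back exactly $h$. Parts \eqref{lem:cT prop f} and \eqref{lem:cT prop int f} then follow by plugging $h = \bbf$ (so $|h|/\bbf \equiv 1$), respectively $b = \bbf$, into the already-verified formulas; the integrability $\bbf \in \Lq$ is precisely the hypothesis that $\bbf$ has finite total mass on $\bbQ$.

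There is no substantive obstacle here — the lemma is essentially a consistency check that the weighted norm $\norm{h}_\bbq^\bbq = \int |h|^\bbq/\bbf^{\bbq-1}$ and bilinear form $\scpr{a}{b} = \int ab/\bbf$ are calibrated so that the triple $(\Lq, \Lp, \scpr{\cdot}{\cdot})$ mimics the classical $L^\bbq$--$L^\bbp$ duality with reference measure $\bbf\,dq$. The only point requiring a moment of care is sign tracking when $h$ is not taken nonnegative: the factor $(|h|/\bbf)^{\bbq-2}$ is nonnegative, so $\hs$ carries the sign of $h$ and the identity $h\cdot\hs = |h|^\bbq/\bbf^{\bbq-2}$ remains valid pointwise, keeping all four computations intact.
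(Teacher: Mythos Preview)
Your approach is correct and essentially identical to the paper's: the paper phrases \eqref{lem:cT prop sc} as ``H\"older for the likelihoods $a/\bbf$, $b/\bbf$'' (i.e.\ with reference measure $\bbf\,dq$), which is the same computation as your Lebesgue-measure splitting, and for \eqref{lem:cT prop hs}--\eqref{lem:cT prop int f} both you and the paper simply invoke the conjugacy identities and substitute. One small slip: in your splitting you want $(a/\bbf^{1/\bbp})\cdot(b/\bbf^{1/\bbq})$, not the other way around---with the exponents as you wrote them the $\bbq$-th power of the $a$-factor would give $|a|^\bbq/\bbf$, not the $|a|^\bbq/\bbf^{\bbq/\bbp}$ you (correctly) claim next.
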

\begin{proof}
 \eqref{lem:cT prop sc} is the H\"older inequality for likelihoods $a/\bbf$ and $b/\bbf$. 
 \\
\eqref{lem:cT prop hs} follows a straightforward calculation using $\bbq(\bbp-1)=\bbp$. We note that $h\cdot \hs\ge 0$.\\
\eqref{lem:cT prop f} and \eqref{lem:cT prop int f} follow directly from the definitions.
\end{proof}

\begin{lem}[Properties of $\cT$]\label{lem:cT prop}
For $0\le h\in\Lq$:
\begin{eqnarray}
\label{lem:cT prop alt}
\cT h &=&  \bbf\cdot\int_\bbP \frac{h}{\bbf}\circ H \cdot\bbg,
\\
\label{lem:cT prop int}
\int_\bbQ\cT h &=&  \int_\bbQ h,\\
\label{lem:cT prop norm}
||\cT h||_\bbq &\le& ||h||_\bbq\,.
\end{eqnarray}
The equality in \eqref{lem:cT prop norm} occurs iff $h=\alpha\cdot\bbf$ ($\bbf$ a.e.), where $\alpha=\alpha(h)=\int h/\int \bbf$.
\end{lem}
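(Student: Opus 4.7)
For part \eqref{lem:cT prop alt}, the plan is to use the first invariance property $(\bbf\cdot\bbg)\circ H=\bbf\cdot\bbg$ as a substitution rule. Writing $H(q,p)=(Q,P)$, this identity gives $\bbg(P)=\bbf(q)\bbg(p)/\bbf(Q)$ at points where $\bbf(Q)>0$ (which is almost everywhere by the support assumption). Substituting into the definition of $\cT h$ turns $h(Q)\cdot\bbg(P)$ into $\bbf(q)\cdot(h/\bbf)(Q)\cdot\bbg(p)$, and since $h/\bbf$ depends only on the first coordinate we may write $(h/\bbf)(Q)=(h/\bbf)\circ H(q,p)$, pulling the factor $\bbf(q)$ out of the integral in $p$.

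For part \eqref{lem:cT prop int}, I would apply Fubini to write $\int_\bbQ \cT h = \iint_{\bbQP}(h\cdot\bbg)\circ H$, invoke the second invariance property $\iint A\circ H=\iint A$ with $A=h\cdot\bbg$, and use Fubini a second time along with $\int_\bbP\bbg=1$ to arrive at $\int_\bbQ h$.

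For part \eqref{lem:cT prop norm}, the key idea is that part \eqref{lem:cT prop alt} exhibits $(\cT h)/\bbf$ as the $\bbg$-average of $(h/\bbf)\circ H$ over $\bbP$, since $\bbg\,dp$ is a probability measure. Jensen's inequality for the convex function $x\mapsto x^\bbq$ ($\bbq>1$) gives pointwise in $q$:
\begin{equation*}
\left(\frac{\cT h}{\bbf}(q)\right)^\bbq \le \int_\bbP \bigl((h/\bbf)\circ H(q,p)\bigr)^\bbq\,\bbg(p)\,dp.
\end{equation*}
Multiplying by $\bbf(q)$ and integrating over $\bbQ$, I rewrite the right-hand side as $\iint_\bbQP ((h/\bbf)\circ H)^\bbq\cdot(\bbf\cdot\bbg)\,dq\,dp$. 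Now I use the first invariance again to replace $\bbf\cdot\bbg$ by $(\bbf\cdot\bbg)\circ H$ so the whole integrand becomes $\bigl((h/\bbf)^\bbq\cdot\bbf\cdot\bbg\bigr)\circ H$, and then the second invariance gives $\iint(h/\bbf)^\bbq\cdot\bbf\cdot\bbg=\|h\|_\bbq^\bbq$ by Fubini and $\int\bbg=1$.

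For the equality case, strict convexity of $x\mapsto x^\bbq$ means Jensen is an equality iff $(h/\bbf)\circ H(q,\cdot)$ is $\bbg$-a.e.\ constant in $p$ for almost every $q$. Since $\bbg>0$ on $\bbP$, this means $(h/\bbf)(Q(q,p))$ is independent of $p$ for a.e.\ $q$. The covering property $Q(q,\bbP)=\bbQ$ then forces $h/\bbf$ to be constant on $\bbQ$, hence $h=\alpha\bbf$ a.e.\ on $\supp\bbf$; integrating and using part \eqref{lem:cT prop int} fixes $\alpha=\int h/\int\bbf$. The only subtle step is this final equality analysis — one has to justify promoting ``constant along fibers $Q(q,\cdot)$'' to ``globally constant'' using the covering assumption, which is precisely why that hypothesis is imposed.
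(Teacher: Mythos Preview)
Your proposal is correct and follows essentially the same route as the paper: part~\eqref{lem:cT prop alt} via the substitution $(\bbf\cdot\bbg)\circ H=\bbf\cdot\bbg$, part~\eqref{lem:cT prop int} via the second invariance and Fubini, and part~\eqref{lem:cT prop norm} via Jensen for $x\mapsto x^\bbq$ followed by the two invariance properties, with the equality case handled through strict convexity and the coverage assumption exactly as the paper does. Your write-up is a bit more explicit in spelling out the pointwise substitution and the role of $\bbg>0$, but there is no substantive difference in strategy.
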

\begin{proof}
\eqref{lem:cT prop alt}:
Using the invariance properties we have
$\int_\bbP \frac{h}{\bbf}\circ H\cdot (\bbf\cdot\bbg)\circ H=
 \int_\bbP \frac{h}{\bbf}\circ H \cdot (\bbf\cdot\bbg)
$
and $\bbf$ does not depend on $p\in\bbP$.\\
\eqref{lem:cT prop int}:
$\int_{\bbQ}\int_{\bbP} (h\cdot\bbg)\circ H =\iint_{\bbQP}(h\cdot \bbg)=
  \left(\int_{\bbQ} h\right)\left(\int_{\bbP}\bbg\right)$.\\
\eqref{lem:cT prop norm}:
$
||\cT h||_\bbq^\bbq=\int_\bbQ \left|\int_\bbP\frac{h}{\bbf}\circ H\cdot\bbg\right|^{\bbq}\bbf
\le
\int_\bbQ \int_\bbP\left|\frac{h}{\bbf}\circ H\right|^{\bbq}\bbg\,\bbf
 =
\iint_\bbQP\left|\frac{h}{\bbf}\right|^\bbq\circ H\cdot(\bbg\cdot\bbf)\circ H
 =
\iint_\bbQP\left|\frac{h}{\bbf}\right|^\bbq\bbg\cdot\bbf
$ the last one being equal to
$\left(\int_\bbQ\left|\frac{h}{\bbf}\right|^\bbq\bbf\right)\cdot\left(\int_\bbP\bbg\right)=
||h||_\bbq^\bbq$, as $\bbg\ge0$ and $\int_\bbP\bbg=1$. Given $q$ the equality occurs only if $(h/\bbf)(H(q,p))$ is a constant for ($\bbg$-)almost all $p$, but by coverage assumption it means that $h/\bbf$ is a constant on ($\bbf$-)almost all $\bbQ$. The constant follows from~\eqref{lem:cT prop int}.\\
\end{proof}
Remark that inequality~\eqref{lem:cT prop norm} is valid also in the boundary cases of $\bbq=1$ and $\bbq=\infty$ (the sup norm). However in case of $\bbq=1$ the equality occurs for all positive functions.

The operator $\cT$ is an averaging operator of the (transported) likelihood $h/\bbf$ with respect to the probability $\bbg$. The scalar functional is monotone:
$0\le a\le b,\quad 0\le c\le d$ implies $\scpr{a}{c}\le \scpr{b}{d}$ and
$\cT$ is positive, in particular if $a\le b$ then $\cT a\le \cT b$.
The function  $\bbf$ provides the eigendirection of fixed points and by \eqref{lem:cT prop norm}
$\cT$ has its spectrum in the unit disk.
The eigenvalue 1 is a unique eigenvalue on the unit circle and it has multiplicity 1.
For any $h\in \Lq$ one has the unique decomposition $h=\alpha\bbf+(h-\alpha\bbf)$ where $\alpha\bbf$ is a direction of the fixed points and
$h-\alpha\bbf\in N:=\{a\in\Lq:\int a =0\}$ lies in an invariant subspace.
It is not \emph{a~priori} clear under what conditions $1$ is isolated in the spectrum, in other words whether the contraction $||\cT h||<||h||$ is uniform on $N$, which would imply $\cT^n N\to \{0\}$ (point-wise) with exponential speed.
\begin{lem}\label{lem: cT h s and cT hs}
  For $2\le \bbq<\infty$ and $\Lq\ni h\ge 0$
  \begin{equation*}\label{eqn: cT h s and cT hs}
    (\cT^n h)^*\le \cT^n(\hs)
  \end{equation*}
  For $1<\bbq\le 2$ the opposite inequality holds.
   The equality happens when $\bbq=2$ or when $h$ is aligned with $\bbf$.
\end{lem}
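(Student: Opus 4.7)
The plan is to establish the case $n=1$ by a direct Jensen inequality applied to the probability measure $\bbg\, dp$ on $\bbP$, then bootstrap to general $n$ by induction using positivity of $\cT$.

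For the base case, the key move is to convert everything to likelihoods $h/\bbf$. Since $h\ge 0$, the conjugate satisfies $\hs/\bbf = (h/\bbf)^{\bbq-1}$, and by Lemma~\ref{lem:cT prop}.\eqref{lem:cT prop alt} both $\cT h$ and $\cT(\hs)$ have the form $\bbf\cdot\int_\bbP (\,\cdot/\bbf)\circ H\cdot \bbg$. Dividing by $\bbf$ and using $\int_\bbP \bbg = 1$, one sees that $(\cT h)^*/\bbf$ equals $\left(\int_\bbP (h/\bbf)\circ H \cdot \bbg\right)^{\bbq-1}$ while $\cT(\hs)/\bbf$ equals $\int_\bbP\bigl((h/\bbf)\circ H\bigr)^{\bbq-1}\cdot \bbg$. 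The desired inequality is then exactly Jensen's inequality on the probability measure $\bbg(p)\,dp$ applied to the nonnegative random variable $p\mapsto (h/\bbf)(H(q,p))$: the function $t\mapsto t^{\bbq-1}$ is convex for $\bbq\ge 2$, giving $(\cT h)^*\le \cT(\hs)$, and concave for $1<\bbq\le 2$, reversing the inequality.

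To lift to $n\ge 1$, I would induct. Note first that $\cT$ maps nonnegative functions to nonnegative functions (immediate from its integral formula, since $\bbf,\bbg\ge 0$) and is monotone in the same sense. Assume (for $\bbq\ge 2$) that $(\cT^n h)^*\le \cT^n(\hs)$. Apply the base case to $\cT^n h$ in place of $h$ to obtain $(\cT^{n+1} h)^*\le \cT\bigl((\cT^n h)^*\bigr)$, then apply $\cT$ to the inductive hypothesis to conclude $\cT\bigl((\cT^n h)^*\bigr)\le \cT\bigl(\cT^n(\hs)\bigr)=\cT^{n+1}(\hs)$. The case $1<\bbq\le 2$ is symmetric with all inequalities reversed. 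The equality case is inherited from Jensen: either $\bbq=2$ (the function $t\mapsto t$ is affine) or the integrand is $\bbg$-a.s.\ constant in $p$; by the coverage assumption $Q(q,\bbP)=\bbQ$, the latter forces $h/\bbf$ to be constant on $\bbQ$, i.e.\ $h=\alpha\bbf$, and both sides equal $\alpha^{\bbq-1}\bbf$ throughout the iteration.

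The argument is essentially routine once one spots the right normalization. The only real point to watch is maintaining the sign conventions so that the Jensen step and the monotonicity step have the inequalities pointing in compatible directions; that is handled automatically by the assumption $h\ge 0$, which propagates through $\cT^n$ and ensures $(\cT^n h)^* = \bbf\cdot (\cT^n h/\bbf)^{\bbq-1}$ with the same sign as $\cT^n(\hs)$.
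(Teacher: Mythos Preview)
Your proof is correct and follows essentially the same approach as the paper: reduce to $n=1$ via induction and positivity of $\cT$, and handle $n=1$ by Jensen's inequality (convexity of $x\mapsto x^{\bbq-1}$) against the probability measure $\bbg\,dp$ using the averaging representation of $\cT$. The paper's proof is simply a terse summary of exactly this argument, with the equality case handled the same way via the coverage assumption.
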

\begin{proof}
The comparison acts in $\Lp$. It is enough to prove for $n=1$, as the general case follows by induction and positivity of $\cT$.
Proof for $n=1$ follows from the  convexity of $x\mapsto x^{\bbq-1}$, positivity of the linear operator
$\cT$ and its averaging property.
Again the inequality is sharp unless $h=\alpha\bbf$.
\end{proof}

\section{The adjoint operator $\cTs$}\label{sec:adjoint}
As $H$ is invertible the inverse map $H^{-1}$ is well defined and it enjoys the same invariance properties as $H$. Define
\[
{\cTs}h=\int_\bbP (h\cdot\bbg)\circ H^{-1}.
\]
It enjoys the same properties as $\cT$ enumerated in Lemmata above. 
It is conjugated to $\cT$ with respect to the duality functional $\scpr{\cdot}{\cdot}$, namely
\begin{lem}
  For $h\in\Lq$ and $k\in\Lp$:
  \begin{equation}\label{lem: cTs}
    \scpr{\cT h}{k}=\scpr{h}{{\cTs} k}
  \end{equation}
\end{lem}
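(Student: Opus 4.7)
The plan is to verify the identity by expanding both sides as integrals over the phase space $\bbQP$ and matching the resulting integrands after a measure-preserving change of variables.

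First I would unfold the definitions:
\[
\scpr{\cT h}{k} = \int_\bbQ \frac{k(q)\,\cT h(q)}{\bbf(q)}\,dq = \iint_\bbQP \frac{k(q)\,h(Q(q,p))\,\bbg(P(q,p))}{\bbf(q)}\,dp\,dq\,,
\]
where I write $H(q,p)=(Q(q,p),P(q,p))$, and similarly
\[
\scpr{h}{\cTs k} = \iint_\bbQP \frac{h(q)\,k(Q^{-1}(q,p))\,\bbg(P^{-1}(q,p))}{\bbf(q)}\,dp\,dq\,,
\]
with $H^{-1}(q,p)=(Q^{-1}(q,p),P^{-1}(q,p))$. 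The goal is to transform the second double integral into the first.

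The key step is to apply the change of variables $(q,p)\mapsto H(q,p)$ in the expression for $\scpr{h}{\cTs k}$. By the second invariance property, $H$ is measure-preserving, so no Jacobian appears. Under this substitution $q\mapsto Q(q,p)$, $p\mapsto P(q,p)$, while $Q^{-1}(H(q,p))=q$ and $P^{-1}(H(q,p))=p$. The integral becomes
\[
\scpr{h}{\cTs k} = \iint_\bbQP \frac{h(Q(q,p))\,k(q)\,\bbg(p)}{\bbf(Q(q,p))}\,dp\,dq\,.
\]
Now I invoke the first invariance property, $(\bbf\cdot\bbg)\circ H=\bbf\cdot\bbg$, in the form $\bbg(p)=\bbf(Q(q,p))\,\bbg(P(q,p))/\bbf(q)$. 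Substituting and cancelling the $\bbf(Q(q,p))$ factors recovers exactly $\scpr{\cT h}{k}$.

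I expect no serious obstacle beyond the careful bookkeeping of the two invariances and the substitution. Integrability is guaranteed: $\scpr{\cT h}{k}$ is finite by H\"older's inequality (Lemma~\ref{lem:prop of nbc}.\eqref{lem:cT prop sc}) together with the contraction bound $\|\cT h\|_\bbq\le\|h\|_\bbq$ from Lemma~\ref{lem:cT prop}.\eqref{lem:cT prop norm}, and analogously for $\cTs$; this justifies applying Fubini and the change of variables. For nonnegative $h,k$ everything may be done pointwise; the general case follows by decomposition into positive and negative parts.
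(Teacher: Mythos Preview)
Your argument is correct and is essentially the paper's own proof: both expand the pairing as a double integral over $\bbQP$, perform the measure-preserving change of variables given by the motion, and then use $(\bbf\cdot\bbg)\circ H=\bbf\cdot\bbg$ to match the remaining factors. The only cosmetic differences are that the paper starts from $\scpr{\cT h}{k}$ via the alternative form~\eqref{lem:cT prop alt} and substitutes with $H^{-1}$, whereas you start from $\scpr{h}{\cTs k}$ via the defining formula and substitute with $H$.
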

\begin{proof}
  Using \eqref{lem:cT prop alt} and invariance
$\scpr{\cT h}{k}=\int_\bbQ\left(\int_\bbP \frac{h}{\bbf}\circ H\cdot\bbg\right)\cdot k=
\iint_{\bbQP}\frac{h}{\bbf}\cdot(\bbg\cdot k)\circ H^{-1}=
\iint_{\bbQP}\frac{h}{\bbf}\cdot(\frac{k}{\bbf})\circ H^{-1}\cdot(\bbg\cdot\bbf)\circ H^{-1}=
\iint_{\bbQP}\frac{h}{\bbf}\cdot(\frac{k}{\bbf}\circ H^{-1})\cdot(\bbg\cdot\bbf)=
\int_{\bbQ}{h}\cdot\left(\int_\bbP\frac{k}{\bbf}\circ H^{-1}\cdot\bbg\right)=\scpr{h}{\cTs k}\,.
$
\end{proof}


The following Lemma provides a sufficient condition for $\cT$ to be self-adjoint. 

Let $\sigma$ be a measure preserving involution $\sigma:\bbP\to\bbP$, $\sigma\circ\sigma={\rm id}$.
We can extend it to $\sigma:\bbQP\to\bbQP$ by $\sigma(q,p)=(q,\sigma(p)$).
Assume that $\bbg$ is invariant with respect to $\sigma$: $\bbg\circ\sigma=\bbg$.
\begin{lem}\label{lem:sigma}
If $\sigma\circ H^{-1}\circ\sigma=H$ and $\bbg$ is invariant with respect to $\sigma$ then ${\cTs}=\cT$.
\end{lem}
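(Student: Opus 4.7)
The plan is to exploit the hypothesis $H^{-1} = \sigma \circ H \circ \sigma$ (which follows by applying $\sigma$ on both sides of $\sigma\circ H^{-1}\circ\sigma = H$ and using $\sigma^2 = \mathrm{id}$), together with the change of variable $p \mapsto \sigma(p)$ in the integral defining $\cTs$, to rewrite $\cTs h$ as $\cT h$. The two invariance inputs I need are that $\sigma$ preserves the $p$-measure (so the substitution has Jacobian $1$) and that the function $(q,p) \mapsto h(q)\bbg(p)$ is $\sigma$-invariant, since $h$ depends on $q$ only and $\bbg\circ\sigma=\bbg$.

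Concretely, I would start from
\[
\cTs h(q) = \int_\bbP (h\cdot\bbg)\bigl(H^{-1}(q,p)\bigr)\,dp,
\]
perform the substitution $p\mapsto\sigma(p)$, and use $(q,\sigma(p))=\sigma(q,p)$ to write $H^{-1}(q,\sigma(p)) = (H^{-1}\circ\sigma)(q,p)$. The hypothesis then rearranges to $H^{-1}\circ\sigma = \sigma\circ H$, so the integrand becomes $(h\cdot\bbg)\circ\sigma\circ H$. Finally, $\sigma$-invariance of $h\cdot\bbg$ collapses this to $(h\cdot\bbg)\circ H$, which is exactly the integrand for $\cT h(q)$. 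Hence $\cTs h = \cT h$ for all admissible $h$.

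No substantial obstacle arises: the whole argument is a bookkeeping exercise of composing $\sigma$ with $H^{\pm 1}$ in the right order. The only place to be a little careful is checking that the measure on $\bbP$ used in the definition of $\cT$ (Lebesgue, say) is indeed the one preserved by $\sigma$, so that the substitution $p\mapsto\sigma(p)$ is legitimate without a Jacobian; this is built into the assumption that $\sigma$ is measure-preserving. One should also note that $\sigma$-invariance of $h\cdot\bbg$ as a function on $\bbQP$ is automatic from $\bbg\circ\sigma=\bbg$ together with $\sigma$ acting only on the $p$-coordinate, so no assumption on $h$ beyond $h\in\Lq$ is required. Since $\cTs$ and $\cT$ are always adjoint through $\scpr{\cdot}{\cdot}$ by the preceding lemma, establishing $\cTs=\cT$ pointwise immediately yields the self-adjointness claim.
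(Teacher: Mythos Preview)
Your proposal is correct and follows essentially the same route as the paper: both arguments use the measure-preserving substitution $p\mapsto\sigma(p)$, the identity $H^{-1}\circ\sigma=\sigma\circ H$ (equivalently $\sigma\circ H^{-1}\circ\sigma=H$), and the $\sigma$-invariance of $h\cdot\bbg$ coming from $\bbg\circ\sigma=\bbg$ and the fact that $\sigma$ acts only on the $p$-coordinate. The only cosmetic difference is that the paper starts from $\cT h$ and rewrites it as $\cTs h$, whereas you go the other way.
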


\begin{proof}
Measure invariance means that $\int_\bbP a\circ\sigma=\int_\bbP a$. Let $(Q,P)=H^{-1}(q,p)$ then
  $\sigma\circ H^{-1}(q,p)=\sigma(Q,P)=(Q,\sigma (P))$ and
  $\cT h=\int_\bbP (h\cdot \bbg)\circ H=\int_\bbP (h\cdot \bbg)\circ \sigma\circ H^{-1}\circ \sigma=
  \int_\bbP (h\cdot \bbg)\circ \sigma\circ H^{-1}=\int_\bbP(h\cdot\bbg)\sigma(Q,P)=
  \int_\bbP (h(Q)\cdot \bbg(\sigma(P))=\int_\bbP h(Q)\bbg(P)=\int_\bbP h\circ H^{-1}\cdot\bbg\circ H^{-1}=\cTs h$.
\end{proof}
As an example take $\bbQ=\bbP=\Real$,
$\sigma$ to be the symmetry (reflection) of the space $\bbP$ with respect to 
0, $\sigma(p)=-p$. An even density $\bbg(p)=\bbg(-p)$ is invariant with respect to $\sigma$.
The Hamiltonian motions $H$ and
$H^{-1}$ satisfy the condition of the Lemma.
\comment{
In what follows we shall assume that
 $ \cT=\cTs$
If it is not the case we can use in the algorithm the operator $\cS={\cTs}\circ\cT$, as ${\cS^\dag}=\cS$.
}
\section{Limits of the sequences $\cT^n$ for a self-adjoint operator $\cT$}
In this section we assume that
 $ \cT=\cTs$.
If it is not the case we can use in the algorithm the operator $\cS={\cTs}\circ\cT$, as ${\cS^\dag}=\cS$.

From  $||\cT h||_\bbq<||h||_\bbq$ by induction we obtain $||\cT^nh||_\bbq<||h||_\bbq$ unless $h=\alpha\bbf$, when equality holds.
For $h\in\Lq$ define
\begin{equation*}\label{eqndef: V}
V_\bbq(h)=\inf ||T^n h||_\bbq^\bbq =\lim ||T^n h||_\bbq^\bbq\,.
\end{equation*}
We see that $V_\bbq(h)=V_\bbq(\cT^n(h))$.
As we are interested in the limit of the sequence $\cT^n h$, for a given $h$ we can assume
that for an arbitrary $\epsilon>0$ we have $||h||_\bbq^\bbq<V_\bbq+\epsilon$, taking a high iterate $\cT^M h$ instead of $h$ if needed.

By a corollary to Alaoglu Theorem bounded sets in reflexive $\Lq$ are weakly (the same as weakly*) compact.
Let $h_\infty$ denote any  weak accumulation point (limit of a subsequences) of $\cT^n h$, say $\cT^{m_n}h\rightharpoonup h_\infty$.
We may assume that the subsequence $(m_n)$ has infinite number of even numbers,
otherwise take $\cT h$ in place of $h$. Then $h_\infty$ is the weak limit of the subsequence indexed by these even numbers.
We shall simplify the notation and use the indices $2m$ for this subsequence.
With this notation we have by the definition of weak convergence that $\scpr{\cT^{2m} h}{b}\to\scpr{h_\infty}{b}$ for every $b\in \Lp$.
\begin{prop}
Assume $\cT=\cTs$. Let $h_\infty$ be a weak limit of a subsequence $\cT^{m_n}(h_0)$, $0\le h_0\in\Lq$, $\bbq\ge 2$.
Then $||h_\infty||_\bbq^\bbq=V_\bbq(h_0)$.
\end{prop}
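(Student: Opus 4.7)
My plan reduces the proposition to two opposite inequalities for $V:=V_\bbq(h_0)$. The easy direction $\norm{h_\infty}_\bbq^\bbq \le V$ I would get from weak lower semicontinuity of the $\Lq$-norm together with the monotone decrease $\norm{\cT^n h_0}_\bbq^\bbq \searrow V$. The reverse inequality $V \le \norm{h_\infty}_\bbq^\bbq$ is the substantive content of the proposition and is where both hypotheses ($\bbq\ge 2$ and $\cT=\cTs$) enter.

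The key ingredient is the ``duality swap'' $(\cT^m h)^*\le \cT^m(h^*)$ from Lemma~\ref{lem: cT h s and cT hs} (valid only for $\bbq\ge 2$). Combined with positivity of $\cT^m h$ and with self-adjointness applied $m_k$ times, for any $0\le\tilde h_0\in\Lq$ it yields
\begin{equation*}
\norm{\cT^{m_k}\tilde h_0}_\bbq^\bbq
=\scpr{\cT^{m_k}\tilde h_0}{(\cT^{m_k}\tilde h_0)^*}
\le \scpr{\cT^{m_k}\tilde h_0}{\cT^{m_k}(\tilde h_0^*)}
=\scpr{\cT^{2m_k}\tilde h_0}{\tilde h_0^*}.
\end{equation*}
I would plug in the shifted starting point $\tilde h_0:=\cT^M h_0$ for a fixed $M\ge 0$. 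Shift-invariance of $V_\bbq$ keeps the left-hand side tending to $V$, while weak continuity of $\cT^M$ gives $\cT^{2m_k}\tilde h_0 = \cT^M(\cT^{2m_k}h_0)\rightharpoonup \cT^M h_\infty$, so the right-hand side tends to $\scpr{\cT^M h_\infty}{(\cT^M h_0)^*}$. Hölder, the identity $\norm{k^*}_\bbp=\norm{k}_\bbq^{\bbq/\bbp}$ from Lemma~\ref{lem:prop of nbc}\eqref{lem:cT prop hs}, and the contraction $\norm{\cT^M h_\infty}_\bbq\le\norm{h_\infty}_\bbq$ then produce
\begin{equation*}
V \le \norm{h_\infty}_\bbq\cdot \norm{\cT^M h_0}_\bbq^{\bbq/\bbp}.
\end{equation*}
Finally, letting $M\to\infty$ sends $\norm{\cT^M h_0}_\bbq^{\bbq/\bbp}\to V^{1/\bbp}$, and since $1-1/\bbp=1/\bbq$ this collapses to $V^{1/\bbq}\le\norm{h_\infty}_\bbq$, matching the easy direction.

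The hard part is arranging the Lemma~\ref{lem: cT h s and cT hs} swap at the right moment: a naive Hölder estimate on $\scpr{h_\infty}{h_0^*}$ by itself reproduces only the trivial bound $V\le \norm{h_\infty}_\bbq\norm{h_0}_\bbq^{\bbq/\bbp}$. It is the convexity of $x\mapsto x^{\bbq-1}$ packaged inside Lemma~\ref{lem: cT h s and cT hs}, combined with self-adjointness, that converts the weak convergence $\cT^{2m_k}h_0\rightharpoonup h_\infty$ into a genuine lower bound on $\norm{h_\infty}_\bbq$; the secondary device of shifting by $M$ is what lets the contraction property replace the stubborn factor $\norm{\cT^M h_0}_\bbq^{\bbq/\bbp}$ by its asymptotic value $V^{1/\bbp}$.
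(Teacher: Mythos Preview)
Your argument is correct and follows the same route as the paper: both use the swap $(\cT^m h)^*\le \cT^m(h^*)$ from Lemma~\ref{lem: cT h s and cT hs}, self-adjointness to collapse $\scpr{\cT^m h}{\cT^m(h^*)}$ into $\scpr{\cT^{2m}h}{h^*}$, then H\"older and the identity $\|h^*\|_\bbp=\|h\|_\bbq^{\bbq/\bbp}$, and finally squeeze the residual factor $\|\cT^M h_0\|_\bbq^{\bbq/\bbp}$ down to $V^{1/\bbp}$. The only cosmetic difference is in that last squeeze: the paper fixes $\epsilon>0$, picks $M$ \emph{from the convergent subsequence} with $\|\cT^M h_0\|_\bbq^\bbq\le V+\epsilon$ (so that $\cT^{m_n-M}(\cT^M h_0)=\cT^{m_n}h_0\rightharpoonup h_\infty$ directly, without invoking weak continuity of $\cT^M$ or the extra contraction $\|\cT^M h_\infty\|_\bbq\le\|h_\infty\|_\bbq$), and then lets $\epsilon\to 0$; you instead let $M$ range freely and send $M\to\infty$. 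One small point to make explicit: your step $\cT^{2m_k}h_0\rightharpoonup h_\infty$ presupposes that the convergent subsequence can be indexed by even integers $2m_k$, which the paper arranges just before the proposition by passing to the even members of $(m_n)$ (replacing $h_0$ by $\cT h_0$ if necessary); you should state this reduction rather than leave it implicit.
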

\begin{proof}
Denote $V=V_\bbq(h_0)$
  Let $\epsilon>0$ and $M$ be one of the indices in the weak converging subsequence large enough so that $h=\cT^M(h_0)$ has the norm $V\le ||h||_\bbq^\bbq\le
  V+\epsilon$, which is possible by definition as $V_\bbq(h)=V$. Then the sequence
$\cT^{m_n-M}(h) \rightharpoonup h_\infty$ weakly  and taking a subsequence and $\cT h $ instead of $h$ if needed, we can assume that the sequence $m_n-M$ consists of infinitely many positive even integers $2m$.
By Lemma~\ref{lem: cT h s and cT hs} and $\bbq/\bbp=\bbq-1$ we have
$
V\le ||\cT^m(h)||_\bbq^\bbq=\scpr{\cT^m( h)}{(\cT^m h)^*}\le
\scpr{\cT^m (h)}{\cT^m (\hs)}= \scpr{\cT^{2m} (h)}{\hs}\to
\scpr{h_\infty}{\hs}\le ||h_\infty||_\bbq\cdot||\hs||_\bbp=
 ||h_\infty||_\bbq\cdot(||h||_\bbq^\bbq)^{1/\bbp}
 =
 ||h_\infty||_\bbq\cdot(||h||_\bbq)^{\bbq-1}\le
 ||h_\infty||_\bbq\cdot(V+\epsilon)^{1-1/\bbq}
 $ thus by the arbitrary choice of $\epsilon>0$ we have $||h_\infty||_\bbq^\bbq\ge V$.
 The opposite direction is standard, we use \eqref{lem:cT prop sc} and \eqref{lem:cT prop hs}:
 $||h_\infty||_\bbq^\bbq=\scpr{h_\infty}{(h_\infty)^*}\leftarrow \scpr{\cT^{2m}(h)}{(h_\infty)^*}
 \le ||\cT^{2m}(h)||_\bbq\cdot||(h_\infty)^*||_\bbp\le ||h||_\bbq\cdot||h_\infty||_\bbq^{\bbq/\bbp}
 \le (V+\epsilon)^{1/\bbq}||h_\infty||_\bbq^{\bbq-1}$.
 \end{proof}
\begin{cor}\label{cor: weak conv}
  For $\bbq\ge 2$ every weak convergent subsequence of $\cT^n(h_0)$, $h_0\in\Lq$
  has a limit of norm $V_\bbq(h_0)$.
  In consequence $\cT^n(h_0)\rightharpoonup \alpha\bbf$.
\end{cor}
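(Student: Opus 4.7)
The first assertion is just a restatement of the preceding proposition applied to each weakly convergent subsequence. For the ``in consequence'' part my plan has three steps.

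First I would show that any weak accumulation point $h_\infty$ of $(\cT^n h_0)$ is of the form $\alpha\bbf$. Since $\cT$ is bounded on $\Lq$ by Lemma~\ref{lem:cT prop}.\eqref{lem:cT prop norm}, it is weakly sequentially continuous: from $\cT^{m_n}(h_0) \rightharpoonup h_\infty$ one obtains $\cT^{m_n+1}(h_0) = \cT\bigl(\cT^{m_n}(h_0)\bigr) \rightharpoonup \cT h_\infty$, so $\cT h_\infty$ is again a weak accumulation point. The proposition then yields $\|\cT h_\infty\|_\bbq = \|h_\infty\|_\bbq = V_\bbq(h_0)^{1/\bbq}$, and the equality clause in Lemma~\ref{lem:cT prop}.\eqref{lem:cT prop norm} forces $h_\infty = \alpha\bbf$ for some scalar $\alpha$.

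Next I would identify $\alpha$. Testing against the element $\bbf \in \Lp$ and using Lemmata~\ref{lem:prop of nbc}.\eqref{lem:cT prop int f} and~\ref{lem:cT prop}.\eqref{lem:cT prop int} we have
\[
\scpr{\cT^n h_0}{\bbf} = \int_\bbQ \cT^n h_0 = \int_\bbQ h_0
\]
for every $n$; passing to the weak limit along any accumulating subsequence gives $\scpr{h_\infty}{\bbf} = \int h_0$, so $\alpha = \int h_0 \big/ \int\bbf$ is independent of the chosen subsequence.

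Finally I would promote uniqueness of the weak accumulation point to global weak convergence. The sequence $(\cT^n h_0)$ is norm-bounded in the reflexive space $\Lq$ because norms are non-increasing under $\cT$. If it did not converge weakly to $\alpha\bbf$, some weakly open neighborhood of $\alpha\bbf$ would be missed by an infinite subsequence; Alaoglu's theorem together with reflexivity would then extract a further weakly convergent subsubsequence whose limit must, by the previous two steps, be $\alpha\bbf$, a contradiction.

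I expect the main obstacle to be the first step: the crucial observation is that weak continuity of $\cT$ lets one feed both $h_\infty$ and $\cT h_\infty$ into the proposition, after which the equality clause of the contraction lemma pins down $h_\infty$ as a scalar multiple of $\bbf$. Steps two and three are then routine bookkeeping.
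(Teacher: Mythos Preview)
Your argument is correct and follows essentially the same line as the paper: both use that $\cT^{m_n+1}(h_0)\rightharpoonup\cT h_\infty$ (the paper phrases the weak continuity as ``use the operator $\cTs$'', i.e.\ $\scpr{\cT^{m_n+1}h_0}{b}=\scpr{\cT^{m_n}h_0}{\cTs b}$, which is exactly the adjoint proof of your boundedness claim), then invoke the proposition and the equality clause of Lemma~\ref{lem:cT prop}.\eqref{lem:cT prop norm} to force $h_\infty=\alpha\bbf$, and finally pass from unique accumulation point to full weak convergence via reflexivity. Your Step~2, pinning down $\alpha$ by testing against $\bbf$, is a small clarification the paper leaves implicit.
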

\begin{proof}
 If $\cT^{m_n}(h_0)\rightharpoonup h_\infty$ then $\cT^{m_n+1}(h_0)\rightharpoonup \cT(h_\infty)$ (use the operator $\cTs$). As they have the same norm, by Lemma~\ref{lem:cT prop}.\eqref{lem:cT prop norm} they are equal $h_\infty=\alpha\bbf$. Therefore every weakly convergent subsequence converges to the same limit, and as every subsequence has a weakly convergent subsequence the whole sequence converges.
\end{proof}
\begin{cor}\label{cor: strong conv}
For $\bbq\ge 2$ for each $h_0$ the sequence $\cT^n (h_0)$ converges strongly to $\alpha \bbf$, where
$\alpha=\int_\bbQ h_0/\int_\bbQ \bbf$.
\end{cor}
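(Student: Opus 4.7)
The plan is to combine the weak convergence established in Corollary~\ref{cor: weak conv} with the fact that the Proposition forces the norms of $\cT^n(h_0)$ to converge to the norm of the weak limit, and then invoke the Radon--Riesz (Kadec--Klee) property of the uniformly convex spaces $\Lq$ for $\bbq\ge 2$.

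First I would identify the value of $\alpha$. Since $\bbf \in \Lp$ (Lemma~\ref{lem:prop of nbc}.\eqref{lem:cT prop f}), weak convergence $\cT^n(h_0)\rightharpoonup h_\infty$ together with Lemma~\ref{lem:prop of nbc}.\eqref{lem:cT prop int f} gives
\[
\int_\bbQ \cT^n(h_0) = \scpr{\cT^n(h_0)}{\bbf} \longrightarrow \scpr{h_\infty}{\bbf} = \int_\bbQ h_\infty.
\]
By Lemma~\ref{lem:cT prop}.\eqref{lem:cT prop int}, the left-hand side equals $\int h_0$ for every $n$, and by Corollary~\ref{cor: weak conv} the limit $h_\infty$ equals $\alpha\bbf$, so $\alpha \int \bbf = \int h_0$, yielding $\alpha = \int h_0/\int \bbf$.

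Next, I would extract the convergence of norms. The Proposition tells us that $\|h_\infty\|_\bbq^\bbq = V_\bbq(h_0)$, and by the definition of $V_\bbq$ we already have $\|\cT^n(h_0)\|_\bbq^\bbq \to V_\bbq(h_0)$. Combined with $h_\infty = \alpha\bbf$, this gives the joint conclusion
\[
\cT^n(h_0) \rightharpoonup \alpha\bbf \qquad \text{and} \qquad \|\cT^n(h_0)\|_\bbq \longrightarrow \|\alpha\bbf\|_\bbq.
\]

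Finally, I would close the argument by uniform convexity. The space $\Lq$ with norm $\|h\|_\bbq^\bbq = \int |h/\bbf|^\bbq\,\bbf$ is isometric to the standard $L^\bbq$-space of the measure $\bbf\,dq$, which is uniformly convex for all $1<\bbq<\infty$ (in particular for $\bbq\ge 2$). By the Radon--Riesz property of uniformly convex Banach spaces, weak convergence together with convergence of norms implies strong convergence, so $\cT^n(h_0)\to \alpha\bbf$ in $\Lq$. The only step that requires any care is step~3: it is essentially a citation of a standard Banach-space fact, but one must make sure to state the isometry between our weighted $\Lq$ norm and a genuine $L^\bbq$-space of a finite measure so that uniform convexity is applicable.
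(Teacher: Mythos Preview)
Your argument is correct and follows exactly the same route as the paper: weak convergence from Corollary~\ref{cor: weak conv}, convergence of norms from the Proposition and the definition of $V_\bbq$, and then the Radon--Riesz (Kadec--Klee) property of the uniformly convex space $\Lq$ to upgrade to strong convergence. The paper compresses all of this into a single sentence invoking ``the strong convexity of the ball in $\Lq$''; your version simply makes the three ingredients explicit and additionally verifies the value of $\alpha$.
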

\begin{proof}
  Due to the strong convexity of the ball in $\Lq$ weak convergent sequence with the convergence of the norms to the norm of the limit convergences strongly.
\end{proof}
\begin{prop}\label{prop:weak conv}
  For any $1<\bbq<\infty$ and $h\in\Lq$ the sequence $\cT^n (h)$ converges weakly to $\alpha(h) \bbf$, where $\alpha(h)=\int_\bbQ h/\int_\bbQ\bbf$.
\end{prop}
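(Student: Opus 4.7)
\medskip\noindent\textbf{Plan of proof.} The case $\bbq\ge 2$ is already settled by Corollary~\ref{cor: weak conv}, so only $1<\bbq<2$ is new. My plan is to pass to the dual exponent $\bbp>2$, where strong convergence of the iterates is already available, and transfer this back via self-adjointness. Concretely, the strategy is: fix a test function $b\in\Lp$, rewrite $\scpr{\cT^n h}{b}=\scpr{h}{\cT^n b}$, invoke strong convergence of $\cT^n b$ in $\Lp$, and identify the resulting limit with $\scpr{\alpha(h)\bbf}{b}$ using Lemma~\ref{lem:prop of nbc}.

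\medskip\noindent\textbf{Main steps.} First, observe that the definitions of $\|\cdot\|_\bbq$, $\scpr{\cdot}{\cdot}$, $*$, and $\cT$ are symmetric under swapping $\bbq$ and $\bbp$, so Lemmata~\ref{lem:prop of nbc}--\ref{lem: cT h s and cT hs} and Corollaries~\ref{cor: weak conv}--\ref{cor: strong conv} apply verbatim in $\Lp$ as well. Since $\bbp>2$, for any $b\in\Lp$ we have $\cT^n b\to \alpha(b)\,\bbf$ strongly in $\Lp$, with $\alpha(b)=\int b/\int\bbf$. (For signed $b$ one splits $b=b^+-b^-$ and uses linearity of $\cT$.) Next, by Lemma~\ref{lem: cTs} and $\cTs=\cT$,
\[
\scpr{\cT^n h}{b}=\scpr{h}{\cT^n b}.
\]
Since $h\in\Lq$ defines a bounded linear functional on $\Lp$ via $\scpr{h}{\cdot}$ (by Hölder, Lemma~\ref{lem:prop of nbc}.\eqref{lem:cT prop sc}), strong convergence in $\Lp$ gives $\scpr{h}{\cT^n b}\to \alpha(b)\scpr{h}{\bbf}$.

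\medskip\noindent\textbf{Identifying the limit.} Using Lemma~\ref{lem:prop of nbc}.\eqref{lem:cT prop int f} one has $\scpr{h}{\bbf}=\int_\bbQ h$ and $\scpr{\bbf}{b}=\int_\bbQ b$, so
\[
\alpha(b)\scpr{h}{\bbf}=\frac{\int b}{\int\bbf}\cdot\int h=\alpha(h)\int b=\scpr{\alpha(h)\,\bbf}{b}.
\]
Because $b\in\Lp$ was arbitrary, this establishes the weak convergence $\cT^n h \rightharpoonup \alpha(h)\,\bbf$ in $\Lq$.

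\medskip\noindent\textbf{Anticipated obstacle.} The argument is essentially a duality transport and the only subtlety I foresee is bookkeeping: several earlier statements were phrased for nonnegative inputs (e.g.\ Lemma~\ref{lem:cT prop} and Corollary~\ref{cor: strong conv}), whereas the test $b\in\Lp$ in general changes sign. This is handled routinely by decomposing $b=b^+-b^-$, applying the nonnegative version of strong $\Lp$-convergence to each piece, and adding. No further technical work seems required, as self-adjointness hands us the dual side of the iteration for free, and reflexivity plays no role here since strong convergence on one side already suffices to verify the weak limit on the other.
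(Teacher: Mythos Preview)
Your proof is correct and follows essentially the same duality argument as the paper: for $1<\bbq<2$ you pair $\cT^n h$ against $b\in\Lp$, use self-adjointness to move the iterates to the $\Lp$ side, and then invoke the already-proved convergence there to identify the limit as $\scpr{\alpha(h)\bbf}{b}$. The only cosmetic difference is that the paper cites weak convergence in $\Lp$ (Corollary~\ref{cor: weak conv}) rather than the stronger Corollary~\ref{cor: strong conv} you use; since $h\in\Lq=(\Lp)^*$ by reflexivity, weak convergence already suffices to pass the limit through $\scpr{h}{\cdot}$, so your invocation of strong convergence is slightly more than needed but of course still valid.
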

\begin{proof}
The case $\bbq\ge 2$ follows from Corollary~\ref{cor: weak conv}.
Let $1<\bbq\le 2$, then $\bbp\ge 2$ and for any $a\in\Lp$ we have $\cT^m a\rightharpoonup \alpha(a)\bbf$, where $\alpha(a)=\int a/\int \bbf$. Let $h\in\Lq$ with $q\le 2$ and $a\in\Lp$. We have
$\scpr{\cT^n h}{a}=\scpr{h}{\cT^n a}\to\scpr{h}{\alpha(a)\bbf}=\alpha(a)\int h=(\int h\cdot \int a)/\int\bbf=\scpr{\alpha(h)\bbf}{a}$. Which means $\cT^n h\rightharpoonup \alpha(h)\bbf$.
\end{proof}
\bibliography{hmc}

\begin{thebibliography}{1}
\expandafter\ifx\csname url\endcsname\relax
  \def\url#1{\texttt{#1}}\fi
\expandafter\ifx\csname urlprefix\endcsname\relax\def\urlprefix{URL }\fi
\expandafter\ifx\csname href\endcsname\relax
  \def\href#1#2{#2} \def\path#1{#1}\fi

\bibitem{DuaneEtAl87}
S.~Duane, A.~Kennedy, B.~J. Pendleton, D.~Roweth,
  \href{http://www.sciencedirect.com/science/article/pii/037026938791197X}{Hybrid
  {M}onte {C}arlo}, Physics Letters B 195~(2) (1987) 216 -- 222.
\newblock \href {https://doi.org/https://doi.org/10.1016/0370-2693(87)91197-X}
  {\path{doi:https://doi.org/10.1016/0370-2693(87)91197-X}}.
\newline\urlprefix\url{http://www.sciencedirect.com/science/article/pii/037026938791197X}

\bibitem{livingstone2019}
S.~Livingstone, M.~Betancourt, S.~Byrne, M.~Girolami,
  \href{https://doi.org/10.3150/18-BEJ1083}{On the geometric ergodicity of
  {H}amiltonian {M}onte {C}arlo}, Bernoulli 25~(4A) (2019) 3109--3138.
\newblock \href {https://doi.org/10.3150/18-BEJ1083}
  {\path{doi:10.3150/18-BEJ1083}}.
\newline\urlprefix\url{https://doi.org/10.3150/18-BEJ1083}

\bibitem{ADHMC}
S.~Ghosh, Y.~Lu, T.~Nowicki, Hamiltonian {M}onte {C}arlo with asymmetrical
  momentum distributions, arXiv:2110.12907 (2021).

\bibitem{GHOSH2022107811}
S.~Ghosh, Y.~Lu, T.~Nowicki,
  \href{https://www.sciencedirect.com/science/article/pii/S0893965921004377}{On
  ${L}_2$ convergence of the {H}amiltonian {M}onte {C}arlo}, Applied
  Mathematics Letters 127 (2022) 107811.
\newblock \href {https://doi.org/https://doi.org/10.1016/j.aml.2021.107811}
  {\path{doi:https://doi.org/10.1016/j.aml.2021.107811}}.
\newline\urlprefix\url{https://www.sciencedirect.com/science/article/pii/S0893965921004377}

\bibitem{hastie2015statistical}
T.~Hastie, R.~Tibshirani, M.~Wainwright,
  \href{https://books.google.com/books?id=LnUIrgEACAAJ}{Statistical Learning
  with Sparsity: The Lasso and Generalizations}, Chapman \& Hall/CRC Monographs
  on Statistics \& Applied Probability, Taylor \& Francis, 2015.
\newline\urlprefix\url{https://books.google.com/books?id=LnUIrgEACAAJ}

\end{thebibliography}
\end{document}